\renewcommand{\aa}[1]{{#1}}
\newcommand{\N}{\mathbb{N}}
\newcommand{\PP}{\mathbb{P}}
\newcommand{\R}{\mathbb{R}}
\newcommand{\T}{\mathbb{T}}
\newcommand{\TN}{\aa{\mathbb{T}^{N}}}
\newcommand{\XX}{\mathcal{X}}
\newcommand{\Z}{\mathbb{Z}}
\newcommand\isf{\Psi^m}
\newcommand\sem{s}
\def\epsilon{\varepsilon}
\renewcommand{\setminus}{\smallsetminus}
\newcommand{\floor}[1]{\left\lfloor #1\right\rfloor}
\newcommand{\set}[1]{\left\{#1\right\}}
\newcommand{\pa}[1]{\left(#1\right)}
\newcommand{\abs}[1]{\left|#1\right|}
\newcommand{\prob}[1]{\mathbb{P}\left(#1\right)}
\newtheorem{theorem}{Theorem}[section]
\newtheorem{definition}[theorem]{Definition}
\newtheorem{lemma}[theorem]{Lemma}
\newtheorem{proposition}[theorem]{Proposition}
\theoremstyle{remark}
\numberwithin{equation}{section}
\title[Random Splitting of Point Vortex Flows]{Random Splitting of Point Vortex Flows}
\author[A. Agazzi]{Andrea Agazzi}
\address{Università di Pisa, Dipartimento di Matematica, 5 Largo Bruno Pontecorvo, 56127 Pisa, Italia.}
\email{\href{mailto:andrea.agazzi at unipi.it}{andrea.agazzi at unipi.it}}
\author[F. Grotto]{Francesco Grotto}
\address{Università di Pisa, Dipartimento di Matematica, 5 Largo Bruno Pontecorvo, 56127 Pisa, Italia.}
\email{\href{mailto:francesco.grotto at unipi.it}{francesco.grotto at unipi.it}}
\author[J. Mattingly]{Jonathan C. Mattingly}
\address{Department of Mathematics and Department of Statistical Science, Duke University, Durham, NC, 27708 USA}
\email{\href{mailto:jonm@duke.edu}{jonm at duke.com}}
\date\today
\begin{document}

\begin{abstract}
    We consider a stochastic version of the point vortex system, in which the fluid velocity advects single vortices intermittently for small random times.
    Such system converges to the deterministic point vortex dynamics as the rate at which single components of the vector field are randomly switched diverges,
    and therefore it provides an alternative discretization of {2D} Euler equations.
    The random vortex system we introduce preserves microcanonical statistical ensembles of the point vortex system, hence constituting a simpler alternative to the latter in the statistical mechanics approach to 2D turbulence.
\end{abstract}

\maketitle

\section{Introduction}\label{sec:introduction}

The Point Vortex (PV) system is a finite-dimensional system of singular ODEs describing the evolution of an incompressible, 2-dimensional fluid in the idealized case where the vorticity, \emph{i.e.} the curl of the velocity field, is concentrated in a finite set of points.

Introduced by Helmholtz in 1858 \cite{Helmholtz1858},
the PV system is known to be well-posed for almost every initial configuration \cite{Durr1982}.
It has been shown to be the limit of solutions of 2D Euler equations \cite{Marchioro1983,Marchioro1988,Marchioro1993} in the well-posedness class $L^\infty$ of the latter, and the PV system itself converges to solutions of 2D Euler equations in a Mean Field scaling regime for initial data in $L^\infty$ \cite{Rosenzweig2022} (\emph{cf.} also \cite{Jabin2018}).

The properties of PV dynamics as a Hamiltonian system with singular interactions have also been the object of extensive research, because of the coexistence of stable and unstable configurations and the presence of singular solutions possibly related to dissipation properties of 2D Euler equations \cite{Grotto2022a}
(\emph{cf.} \cite{Modin2021} for an overview on PV as Hamiltonian dynamics).

Just as in the case of the closely related {2D} Euler PDE dynamics, the main problem concerning PV systems is the long-time asymptotic properties of the solutions. Equilibrium states of PV systems play a prominent role in the statistical mechanics approach to 2D turbulence rooted in the works of Onsager \cite{Onsager1949}, with the convergence towards states exhibiting the formation of coherent structures being the crucial mathematical open problem \cite{Tabeling2002}.

The present note is devoted to a stochastic modification of the PV system inspired by the \emph{random splitting} technique recently developed in \cite{Agazzi2022,Agazzi2023}. We will prove that the stochastic vortex flow we exhibit is in fact a regularized version of the deterministic PV dynamics converging to the latter in the limit of small regularization parameter. This in turn implies convergence towards solutions of 2D Euler equations in sight of the aforementioned results. Unlike the original PV system, the stochastic dynamics we propose is well-defined for \emph{all} initial configurations, the convergence to the deterministic system holding up to the time of eventual singularities, as in other versions of PV dynamics regularized by the introduction of noise \cite{Flandoli2011,Grotto2023}.

The most important feature of the stochastic dynamics we propose is that it preserves the same kinetic energy functional (\emph{i.e.} the PV Hamiltonian) as the original deterministic flow. To the best of our knowledge, this is the first desingularization method for PV dynamics that preserves such a crucial first integral of motion, possibly opening the way for a new approach to the study of microcanonical ensembles of PVs and their relation with 2D turbulence. We defer a proper discussion to \Cref{sec:statmech}, after having established a rigorous construction and our main results in \cref{sec:main}.

\section{Splitting Vortex Flows}\label{sec:main}

We consider the dynamics on the periodic space domain $\T\simeq [0,1)^2$ and establish our results on the finite time interval $t\in [0,1]$. All the forthcoming arguments can be easily adapted to the general case of PV dynamics on smooth surfaces with or without boundaries. Throughout, we define $k^\perp := (-k_2, k_1)$ for $k = (k_1,k_2) \in \mathbb R^2$, extending this notation naturally to the differential operator $\nabla := (\partial_1, \partial_2)$, and denote by $|\cdot|$, respectively $\|\cdot\|$ the Euclidean and induced operator norm.

\subsection{Deterministic Vortex Dynamics}
A system of $N$ point vortices with \emph{intensities} $\xi_1,\dots,\xi_N\in \R \setminus \{0\}$ and distinct positions
\begin{equ}
    x=(x_1,\dots,x_N)\in \XX := \set{x\in \TN: x_i\neq x_j \,\forall i\neq j}\,,
\end{equ}
evolves according to the dynamics
\begin{equ}\label{eq:pv}
    \dot x_i = v_i(x), \quad v_i(x)=\sum_{j\neq i} \xi_j K(x_i-x_j)\,,
\end{equ}
where
\begin{equ}
    K:\T\setminus (0,0) \to \R^2,\quad K(x)=\nabla^\perp \Delta^{-1}(x)
    =\frac1{2\pi} \sum_{k\in \Z^2\setminus (0,0)} \frac{k^\perp}{|k|^2} e^{2\pi k\cdot x}\,,
\end{equ}
is the 2D Biot-Savart kernel, whose action on a vorticity distribution returns the corresponding velocity field of the fluid, which in turn advects vortex positions.

As proved in \cite{Durr1982}, for almost all initial configurations $x=x(0)\in\XX$ with respect to the product Lebesgue measure on $\TN\supset \XX$ the ODE system \cref{eq:pv} admits a unique solution which is smooth and global in time. With a slight abuse of notation we will denote by $\Phi_t:\TN\to \TN$ the (almost-everywhere defined) solution flow of \cref{eq:pv}, \emph{i.e.} the flow of $ v=(v_1,\dots,v_N)$ on $\XX$.

We will also denote by $\Phi^{(i)}_t:\TN\to \TN$ the flow of a single component of the velocity field, $(0,\dots,v_i,\dots,0)$. For $i> N$, abusing again notation, we will write $\Phi^{(i)}$ implying that the apex is to be considered modulo $N$. Notice that, unlike $\Phi$, each flow $\Phi^{(i)}$ is well-defined for all times at any point $x\in \XX$, because of the particular form of the interaction kernel $K$, which prevents the $i$-th vortex from colliding with any other one.

\subsection{Stochastic Splitting}
Denoting throughout $\floor{y} := \max\{k \in \mathbb N~:~k\leq y\}$,
we define the (stochastically) split PV flow(s) as follows:

\begin{definition}
    Let $m \in \N$ and consider a vector of i.i.d. non-negative random variables $\tau = (\tau_i)_{i = 1}^\infty$ with common distribution $\rho$ having at most exponential tails and satisfying $\mathbb E(\tau_i) = 1$.
    For $t>0$,  define the \emph{jumping stochastic flow}
    \begin{equ}
        \Phi_{t}^m(x) = \Phi_{\tau_{\ell N}/m}^{(N)} \circ \dots \circ \Phi_{\tau_{1}/m}^{(1)}(x),\qquad \ell = \floor{mt}\,,
    \end{equ}
    and the \emph{interpolated stochastic flow} as the solution of
    \begin{equ}
    \frac{d}{dt} \isf_t(x)=N \tau_i v_j(x), \quad i=\floor{Nmt},\quad j=i\,(\mathrm{mod}\, N)\,.
    \end{equ}
    In particular, when $m t$ is an integer,
    \begin{equ}\label{e:psiandphi}
        \isf_t(x)= \Phi_{t}^m(x)\,.
    \end{equ}
\end{definition}

Concretely, we let the single components $\Phi^{(i)}$ of the PV flow act one by one, over small (random) time intervals. The difference between $\Phi^m$ and $\Psi^m$ consists in the former being piecewise constant in time and the latter having continuous trajectories.
Notice that the stochastic flows are well-defined for all initial configurations, even the ones leading to collapse at finite time the dynamics \cref{eq:pv}, since this is the case for every single $\Phi^{(i)}$.

\subsection{Convergence for regularized interaction kernels}\label{ssec:smooth}

Consider the following smooth approximation of the PV interaction kernel:
\begin{equ}
    K_\delta(x)=(1-\chi_\delta(x)) K(x), \quad \delta>0\,,
\end{equ}
with $\chi_\delta\in C^\infty(\T)$ supported by a $\delta$-neighborhood of $0\in\T$ and with $\chi_\delta(0)=1$.
In the present paragraph, we assume that $K_\delta$ replaces $K$ in the definitions of flows $\Phi,\Phi^{(i)},\Phi^m,\isf$,
omitting dependence on $\delta$ for a lighter notation.
Notice that if a solution of the PV dynamics is such that vortices are $\delta$-separated at all times,
that is also a solution of \cref{eq:pv} with $K_\delta$ replacing $K$.

\begin{proposition} \label{p:convKsmooth}
    Let $K_\delta$ replace $K$ in \cref{eq:pv} and the subsequent definitions, and let $x\in \mathcal X$ be fixed. Then
    $\PP$-almost surely, for all $t\in [0,1]$,
    \begin{equ}
       \isf_t(x) \to \Phi_t(x)\qquad \text{as } m\to\infty\,.
    \end{equ}
\end{proposition}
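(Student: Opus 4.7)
The plan is to compare the interpolated flow and the deterministic one by a Duhamel/Gronwall argument, showing that their discrepancy is driven by an averaging error that vanishes as $m\to\infty$. Since $K_\delta$ is smooth and bounded, the single-vortex vector fields $V^{(i)}(x):=(0,\dots,v_i(x),\dots,0)$ and their sum $v=\sum_i V^{(i)}$ are globally bounded and Lipschitz on $\TN$, and in particular all the flows $\Phi_t$, $\Phi^{(i)}_t$, $\isf_t$ are defined for all times and all initial data. Writing the two flows in integral form and adding and subtracting $N\tau_{i(s)}V^{(j(s))}(\Phi_s(x))$ (with $i(s)=\lfloor Nms\rfloor$ and $j(s)=i(s)\bmod N$) I would decompose
\[
\isf_t(x) - \Phi_t(x) = \int_0^t N\tau_{i(s)}\bigl[V^{(j(s))}(\isf_s(x)) - V^{(j(s))}(\Phi_s(x))\bigr]ds + E^m_t(x),
\]
with averaging error $E^m_t(x) := \int_0^t [N\tau_{i(s)}V^{(j(s))}(\Phi_s(x)) - v(\Phi_s(x))]\,ds$. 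Gronwall's inequality, together with the strong law $\int_0^1 N\tau_{i(s)}ds = \tfrac1m\sum_{k=1}^{Nm}\tau_k \to N$ a.s., bounds the first term and reduces the proof to showing $\sup_{t\in[0,1]}|E^m_t(x)|\to 0$ almost surely.

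For this I would first note that the integrand defining $E^m_t$ is bounded by $B(N\tau_{i(s)}+1)$ where $B$ controls all $V^{(i)}$ and $v$ uniformly, so that, using the exponential-tail hypothesis together with Borel-Cantelli to obtain $\tau_{\max,m}:=\max_{k\leq Nm}\tau_k = O(\log m)$ a.s., one has $\sup_t |E^m_t - E^m_{\lfloor mt\rfloor/m}| = O(\tau_{\max,m}/m)\to 0$ a.s. It then suffices to control $E^m_{\ell/m}$ for integer $\ell\in\{0,\dots,m\}$. Splitting $E^m_{\ell/m}$ into contributions of the $\ell$ cycles $[(k-1)/m,k/m]$, Taylor-expanding $\Phi_s$ around the left endpoint of each cycle (using that $\Phi$ has velocity bounded by $B$ and that $V^{(j)}$ is $L$-Lipschitz), and using the identity $v=\sum_r V^{(r)}$, the $k$-th cycle contributes
\[
\frac{1}{m}\sum_{r=1}^N (\tau_{(k-1)N+r}-1)\, V^{(r)}(\Phi_{(k-1)/m}(x)) + O\Bigl(\frac{\tau_{(k-1)N+1}+\dots+\tau_{kN}}{m^2}\Bigr).
\]
Summing over cycles, the deterministic remainders total $O\bigl(\tfrac{1}{m^2}\sum_{k=1}^{Nm}\tau_k\bigr) = O(1/m)\to 0$ again by the strong law.

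The leading stochastic error is $M^m_\ell := \sum_{k=1}^\ell \tfrac1m \sum_{r=1}^N (\tau_{(k-1)N+r}-1)V^{(r)}(\Phi_{(k-1)/m}(x))$, which is a martingale in $\ell$ with respect to the filtration generated by $(\tau_k)$. Its increments are conditionally centered and sub-exponential of scale $O(1/m)$, so that Doob's maximal inequality combined with a Bernstein bound for sub-exponential sums yields $\PP(\max_{\ell\leq m}|M^m_\ell| > \epsilon) \leq C\exp(-c\epsilon^2 m)$, which is summable in $m$; a final application of Borel-Cantelli completes the proof.

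The main obstacle is obtaining uniform-in-$t$ almost sure convergence in the presence of unbounded random weights: both the Gronwall factor and the martingale tail bound rely crucially on the exponential tail assumption on $\rho$, forcing the use of Bernstein-type concentration in place of the cleaner Azuma inequality available in the bounded case. The cycle decomposition and Taylor expansion are routine once the single-vortex fields have been identified as globally Lipschitz, and the passage from grid times $\ell/m$ to arbitrary $t\in[0,1]$ reduces to the boundedness of the interpolated velocity modulated by $\tau_{\max,m}=O(\log m)$.
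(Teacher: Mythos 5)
Your proof is correct, and it takes a genuinely different route from the paper's. The paper first establishes convergence of the fixed-time marginals (Lemma~\ref{l:marginals}) by coupling $\Psi^m$ with the jumping flow $\Phi^m$ and an auxiliary equal-timestep flow $\widetilde\Phi$, and then invoking the almost-sure operator-norm convergence result of \cite[Thm.~4.5]{Agazzi2023} as a black box applied to suitable bump-function observables; it then upgrades to uniformity in $t$ by discretizing $[0,1]$ on a mesh of size $\sem=\epsilon^2$, propagating the one-step errors through a Gronwall-type estimate on the events $B_{j,m}$, $B_{j,m}'$, and taking a union bound over the mesh. Your argument is instead self-contained: the Duhamel decomposition isolates the averaging error $E^m_t$, Gronwall with the random Lipschitz budget $\frac1m\sum_{k\le Nm}\tau_k\to N$ reduces everything to $\sup_{t}|E^m_t|\to 0$, and the cycle decomposition exposes the leading term as a weighted sum of independent centered sub-exponential increments of scale $O(1/m)$ with \emph{deterministic} weights $V^{(r)}(\Phi_{(k-1)/m}(x))$ (since $\Phi$ is deterministic), which Bernstein plus Borel--Cantelli kill. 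What each approach buys: the paper's route reuses the general random-splitting machinery of \cite{Agazzi2022,Agazzi2023} and records the estimate uniformly in the initial condition $x$ (not needed for the proposition as stated); yours avoids the external citation, obtains uniform-in-$t$ convergence in a single pass without the two-scale mesh argument, and yields an explicit almost-sure rate (roughly $O(\sqrt{\log m/m})$ after optimizing the Bernstein threshold). Both uses you make of the exponential-tail hypothesis (the bound $\max_{k\le Nm}\tau_k=O(\log m)$ and the Bernstein concentration) are legitimate under the paper's assumption on $\rho$. The only point to make explicit in a full write-up is that distances should be measured in $d_\T$ via local lifts to $\R^{2N}$, which is harmless since all per-cycle displacements are $O(\tau_{\max,m}/m)$.
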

\noindent
Defining
   \begin{equs}\label{e:dT}
   d_\T(x,y)^2 := \sum_{j=1}^N \sum_{i = 1}^2\min\big(\abs{(x_j)_i-(y_j)_i},1-\abs{(x_j)_i-(y_j)_i}\big)^2\,,
   \end{equs}
we first prove convergence of fixed-time marginals of the interpoleted stochastic flow:

\begin{lemma} \label{l:marginals}
    Let the assumptions of \cref{p:convKsmooth} hold and fix $t \in [0,1]$, then for all $\epsilon > 0$
    \begin{equ}
       \prob{\limsup_{m \to \infty}\sup_{x \in \mathcal X}d_\T\pa{\Psi_t^m(x), \Phi_t(x)} > \epsilon } = 0\,.
    \end{equ}
\end{lemma}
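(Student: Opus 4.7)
The plan is to analyze the splitting error over one full sweep of the $N$ components (length $1/m$ in the interpolated-flow time scale), iterate via a discrete Grönwall inequality in which the principal random term forms a martingale difference sequence, and upgrade the resulting pointwise almost-sure convergence to uniformity in $x$ via an equicontinuity argument on a countable dense subset of $\XX$.

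Smoothness of $K_\delta\in C^\infty(\T)$ gives uniform bounds $\|v_i\|_\infty,\|\nabla v_i\|_\infty\le M$ for all $i$. Hence each $\Phi^{(i)}_s$ is $e^{Ms}$-Lipschitz in its initial condition, one cycle of $\Psi^m$ has Lipschitz constant $e^{MT_k/m}$ with $T_k:=\sum_{i=1}^N\tau_{kN+i}$, and the whole flow $\Psi^m_t$ is Lipschitz with constant $\exp\bigl((M/m)\sum_{k<\lfloor mt\rfloor}T_k\bigr)$. Since $\EE[T_k]=N$ and $\rho$ has exponential tails, the law of large numbers combined with Borel--Cantelli delivers an almost-sure uniform-in-$m$ bound $e^{C(t)}$ on this Lipschitz constant; consequently the family $\{\Psi^m_t,\Phi_t\}_m$ is uniformly equicontinuous in $x$.

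For pointwise convergence, fix $x\in\XX$ and set $y_k=\Psi^m_{k/m}(x)$, $z_k=\Phi_{k/m}(x)$. A Taylor expansion of each $\Phi^{(i)}_{\tau_{kN+i}/m}$ followed by composition of the $N$ expansions (cross-terms absorbed using $\|\nabla v_i\|_\infty\le M$) gives
$$y_{k+1}=y_k+\tfrac{1}{m}\sum_{i=1}^N\tau_{kN+i}\,v_i(y_k)+\mathcal E_k,\qquad |\mathcal E_k|\le C(1+T_k^2)/m^2,$$
while $z_{k+1}=z_k+\tfrac{1}{m}v(z_k)+O(1/m^2)$. Subtracting and using $\sum_i v_i=v$ together with the Lipschitz bound on $v$ yields
$$|y_{k+1}-z_{k+1}|\le(1+CM/m)|y_k-z_k|+|R_k|+|\mathcal E_k|,\quad R_k:=\tfrac{1}{m}\sum_{i=1}^N(\tau_{kN+i}-1)v_i(y_k).$$
With $\mathcal F_k:=\sigma(\tau_1,\dots,\tau_{kN})$, the variable $y_k$ is $\mathcal F_k$-measurable while $(\tau_{kN+i})_{i=1}^N$ is independent of $\mathcal F_k$ with mean $1$, so $(R_k)$ is a martingale difference sequence with $\|R_k\|_{L^{2p}}\le C_p/m$. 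Burkholder--Davis--Gundy then yields $\EE\bigl[\sup_{\ell\le\lfloor mt\rfloor}\bigl|\sum_{k<\ell}R_k\bigr|^{2p}\bigr]\le C_p\,t^p/m^p$, and $\EE\sum_k|\mathcal E_k|=O(1/m)$ by the bound on $T_k^2$. Solving the Grönwall recursion gives $|y_{\lfloor mt\rfloor}-z_{\lfloor mt\rfloor}|\le e^{CMt}\bigl(\sup_\ell\bigl|\sum_{k<\ell}R_k\bigr|+\sum_k|\mathcal E_k|\bigr)$; Markov and Borel--Cantelli with $p\ge 2$ (summing over $m\in\N$) give almost-sure vanishing, and the partial last cycle contributes at most $d_\T(\Psi^m_t(x),\Psi^m_{\lfloor mt\rfloor/m}(x))\le CT_{\lfloor mt\rfloor}/m\to 0$ a.s.

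To globalize in $x$, fix a countable dense subset $\{x^{(n)}\}\subset\XX$: on the intersection of countably many full-probability events, the preceding step holds at every $x^{(n)}$ simultaneously. Choosing an $\eta$-net from $\{x^{(n)}\}$ and combining with the uniform-in-$m$ Lipschitz bound of the second paragraph yields $\sup_{x\in\XX}d_\T(\Psi^m_t(x),\Phi_t(x))\le \max_n d_\T(\Psi^m_t(x^{(n)}),\Phi_t(x^{(n)}))+C\eta$ for all $m$ large enough; letting $\eta\to 0$ concludes. The main technical obstacle is the interplay of the uniform Lipschitz bound (driven by an LLN for $\sum_k T_k$) with the martingale bound on $\sum_k R_k$, both of which rely essentially on the exponential-tail hypothesis on $\rho$.
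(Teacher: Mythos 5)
Your argument is correct in substance but follows a genuinely different route from the paper. The paper does not redo the error analysis of the splitting scheme from scratch: it reduces the statement to the operator-norm convergence result \cite[Thm.~4.5]{Agazzi2023} for random splittings of smooth vector fields, namely $\PP(\limsup_\ell \|S_t-\widetilde S^{\ell}_{t\tau/\ell}\|_{2\to 0}>\epsilon')=0$, and converts that into convergence of the flows in $d_\T$ by testing against $C^2$ bump functions centered at $\Phi_t(x)$ (this automatically gives uniformity in $x$, since the operator norm is a supremum over unit-norm test functions and over evaluation points). The remaining work in the paper is bookkeeping between the three flows $\Psi^m_t$, $\Phi^m_t$ and the equal-timestep splitting $\widetilde\Phi^{\lfloor mt\rfloor}_{t/\lfloor mt\rfloor}$, handled by Lipschitz estimates and the strong law of large numbers. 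You instead give a self-contained strong error analysis: one-cycle Taylor consistency, a martingale-difference decomposition of the fluctuation $R_k=\frac1m\sum_i(\tau_{kN+i}-1)v_i(y_k)$ controlled by Burkholder--Davis--Gundy and Borel--Cantelli, a discrete Gr\"onwall stability step, and a separate equicontinuity argument to pass from a countable dense set to uniformity in $x$. Your approach buys independence from the external theorem at the cost of more delicate estimates; the paper's buys brevity and uniformity in $x$ for free.

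One step deserves care. From the displayed recursion $|y_{k+1}-z_{k+1}|\le(1+CM/m)|y_k-z_k|+|R_k|+|\mathcal E_k|$ one can only conclude a bound involving $\sum_k|R_k|$, which is $O(1)$ and does not vanish; the cancellation you need lives in the \emph{signed} sum. To reach the bound you state, keep the recursion in vector form, sum it first to get $|e_\ell|\le \bigl|\sum_{k<\ell}R_k\bigr|+\frac{CM}{m}\sum_{k<\ell}|e_k|+\sum_k|\mathcal E_k|$, and only then apply discrete Gr\"onwall; this yields exactly $|e_{\lfloor mt\rfloor}|\le e^{CMt}\bigl(\sup_\ell\bigl|\sum_{k<\ell}R_k\bigr|+\sum_k|\mathcal E_k|\bigr)$. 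Likewise, Markov plus Borel--Cantelli on the first moment of $\sum_k|\mathcal E_k|=O(1/m)$ is not summable in $m$; use either the law of large numbers for $\frac1{m}\sum_k(1+T_k^2)$ or a second-moment Markov bound. With these two repairs the proof is complete.
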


\begin{proof}[Proof of \cref{l:marginals}]
    For $\ell\in \mathbb N$, introduce the $\ell$-step jumping flow with timestep $h > 0$:
        \begin{equ}
        \widetilde \Phi_{h}^{\ell}(x) := \widetilde\Phi_h^{(\ell N,1)}(x)\,,\qquad
        \end{equ}
        where for any $1\leq j \leq i $,
        \begin{equ}\widetilde \Phi_h^{(i,j)}(x) :=  \Phi_{h\tau_{i}}^{(i)} \circ \dots \circ \Phi_{h\tau_{j}}^{(j)}(x)\,.
    \end{equ}
    We couple $\widetilde \Phi_{h}^{\ell}(x)$ with $\Phi_t^m(x)$ and $\Psi_t^m(x)$ by setting $h = 1/m$ and identifying the underlying $\tau = (\tau_i)_{i=1}^\infty$, so that whenever $mt \in \mathbb N$, we have
    $
    \widetilde\Phi_{1/m}^{mt}(x) = \Phi_t^m(x) = \Psi_t^m(x)
    $.\\
    We proceed to prove that  for any $t \in [0,1]$ 
    and for any $\epsilon>0$ sufficiently small, we have
    \begin{equs}
      \prob{\limsup_{m \to \infty}\sup_{x \in \mathcal X}d_\T\pa{\Psi_t^m(x), \Phi_t^m(x)} > \frac \epsilon 3} &= 0\,,\label{e:bound0}\\
    \prob{\limsup_{m \to \infty} \sup_{x \in \mathcal X}d_\T\pa{\Phi_t^m(x), \widetilde \Phi_{t/\floor{mt}}^{\floor{mt}}(x)} > \frac \epsilon 3} &= 0\,,\label{e:bound1} \\
       \prob{\limsup_{m \to \infty} \sup_{x \in \mathcal X}d_\T\pa{\widetilde\Phi_{t/\floor{mt}}^{\floor{mt}}(x) , \Phi_t(x)} > \frac \epsilon 3} &= 0\,,\label{e:bound2}
    \end{equs}
    which, combined, yield the desired result.

    Starting from \cref{e:bound2}, we recall from \cite{Agazzi2022} the definition of the flow maps
    \begin{equs}
        S_t f(x)  := f(\Phi_t(x))\,,\quad
        \widetilde S_{h \tau}^{\ell} f(x) := f(\widetilde \Phi_{h}^{\ell}(x))\,,
    \end{equs}
    and their operator norm
    \begin{equs}
        \|S\|_{2 \to 0} := \sup_{\|f\|_{2} = 1} \pa{\sup_{x \in \XX} |S f(x)|}\,,
    \end{equs}
    where $\|f\|_2$ denotes the following norm on $C^2(\mathcal X)$:
    \begin{equ}
      \|f\|_2 := \sup_{x \in \mathcal X, k=0,1,2} \left( \sup_{|\eta| = 1} |D_x^k f(x) \eta|\right)\,.
    \end{equ}
    Under the smoothness hypothesis of this Lemma, by \cite[Thm. 4.5]{Agazzi2023} -- which uses the same notation we just recalled --
    for every $\epsilon' >0$ it holds
    \begin{equs}\label{e:asresult}
        \prob{\limsup_{\ell \to \infty} \|S_t - \widetilde S_{t\tau /\ell}^{\ell}\|_{2\to 0 } > \epsilon' } = 0\,.
    \end{equs}
    \cref{e:bound2} now follows by choosing
    \begin{equ}
        f_{x,t, \epsilon}(\cdot) := 1- \bar \chi_{\epsilon,\Phi_t(x)}(\,\cdot \,),\quad x \in \XX, t > 0,\epsilon >0\,,
    \end{equ}
    with $\bar \chi_{\epsilon,y}\in C^2(\XX)$ having support in the $d_\T$-ball $\mathcal B_{\epsilon/3}(y)$ of radius $\epsilon/3$ around $y\in \XX$, equalling $1$ in $y$ and such that
    \begin{equ}
      \sup_{x \in \XX}\|D_x^2\bar \chi_{\epsilon,y}(x)\| \leq 64 \epsilon^{-2}\quad \text{ and }\quad \sup_{x \in \XX}|D_x\bar \chi_{\epsilon,y}(x)| \leq 16 \epsilon^{-1}\,.
    \end{equ}
    Indeed, setting $\epsilon' = \epsilon^2/128$ we have
    \begin{equs}
  \,&  \prob{\limsup_{m\to \infty}\sup_{x \in \mathcal X}d_\T\pa{\widetilde \Phi_{t/\floor{mt}}^{\floor{mt}}(x) , \Phi_t(x)} > \frac \epsilon 3} \\
  & \qquad\qquad\qquad\quad \qquad\qquad\leq \prob{\limsup_{\ell\to \infty}\sup_{x \in \mathcal X}|f_{x,t,\epsilon}(\widetilde\Phi_{t/\ell}^{\ell}(x))|>\frac{1}2}\\
    & \qquad\qquad\qquad\quad \qquad\qquad= \prob{\limsup_{\ell\to \infty}\sup_{x \in \mathcal X}|(S_t - \widetilde S_{t\tau/\ell}^{\ell})2 \epsilon' f_{x,t,\epsilon}(x)|>\epsilon'}\\
    &\qquad\qquad\qquad\quad \qquad\qquad\leq \prob{\limsup_{\ell\to \infty}\|S_t - \widetilde S_{t\tau/\ell}^{\ell}\|_{2 \to 0}>\epsilon'} = 0\,,
    \end{equs}
    the last step following from \cref{e:asresult}.

    We now turn to proving \cref{e:bound1}. To do so we note that, defining
    \begin{equ}\label{e:M}
      M := \sup_{x \in \TN, i \in \{1, \dots, N\}} (|v_i(x)|, \|D_xv_i(x)\|)\,,
    \end{equ}
       we have that for all $t >0 $
    \begin{equ}\label{eq:trajsep}
      \sup_{r \in (0,t)} d_\T(\Phi^{(i)}_t(x), \Phi^{(i)}_t(y)) \leq
      e^{Mt} d_\T(x,y)\,,
    \end{equ}
    so that we can write, uniformly in $x \in \mathcal X$,
    \begin{equs}
    \,& d_\T \pa{\Phi_t^m(x), \widetilde \Phi_{t/\floor{mt}}^{\floor{mt}}(x)} \\
    &\qquad \qquad \qquad \qquad = d_\T\pa{\widetilde \Phi_{1/m}^{\floor{mt}}(x), \widetilde \Phi_{t/\floor{mt}}^{\floor{mt}}(x)}
    \\&  \qquad \qquad \qquad \qquad  \leq \sum_{j=1}^{N\floor{mt}}   d_\T\pa{\widetilde \Phi_{1/m}^{(N\floor{mt}},j)(\widetilde \Phi_{t/\floor{mt}}^{(j-1,1)}(x)), \widetilde \Phi_{1/m}^{(N\floor{mt}},j+1)(\widetilde \Phi_{t/\floor{mt}}^{(j,1)}(x))}\\
    & \qquad \qquad \qquad \qquad \leq e^{\frac Mm \sum_{j=1}^{N\floor{mt}} \tau_j}  \sum_{j=1}^{N\floor{mt}} \sup_{y}d_\T\pa{\widetilde \Phi_{1/m}^{(j)}(y), \widetilde \Phi_{t/\floor{mt}}^{(j)}(y)}\\
    & \qquad \qquad \qquad \qquad \leq e^{\frac M{mt} \sum_{j=1}^{N\floor{mt}} \tau_j} M \sum_{j=1}^{N\floor{mt}} |1/m-t/\floor{mt}|\tau_j\\
& \qquad \qquad \qquad \qquad  \leq \left|1- \frac{tm}{\floor{tm}}\right| t e^{ \frac {NM}{N\floor{mt}} \sum_{j=1}^{N\floor{mt}} \tau_j}\frac {NM}{N\floor{mt}} \sum_{j=1}^{N\floor{mt}} \tau_j\,.
    \end{equs}
    Combining the strong law of large numbers for $\frac 1 {\ell}\sum_{k = 1}^{\ell} \tau_k$ in $\ell = N\floor{mt} \to \infty$ with $ tm/\floor{tm} \to 1$ as $m \to \infty$ we have that the right hand side converges almost surely to $0$, proving the desired result.

    Finally, to prove \cref{e:bound0}, we define $\floor{t}_m := \max \left \{ \frac j m ~:~\frac j m \leq t\,,\, j \in \mathbb N\right\}$,
    so that
    $m\floor{t}_m \in \mathbb N$ and by \cref{e:psiandphi} we have $\Phi_t^m(x) = \Phi_{\floor{t}_m}^m(x) = \Psi_{\floor{t}_m}^m(x)$. Then, recalling \cref{e:M} we write
    \begin{equs}
      \sup_{x \in \mathcal X}d_\T\pa{\Phi_t^m(x), \Psi_t^m(x)} = \sup_{x \in \mathcal X}d_\T\pa{\Psi_{\floor{t}_m}^m(x), \Psi_t^m(x)} \leq \frac{\tau_{m \floor{t}_m} N M}{m}
    \end{equs}
    which converges almost surely to $0$ as $m \to \infty$, establishing the claim.
    \end{proof}

    \begin{proof}[Proof of \cref{p:convKsmooth}]
      Defining for every $x \in \mathcal X$
      \begin{equ}
        A_m(\epsilon) := \left\{\sup_{t \in [0,1]} d_\T(\isf_t(x), \Phi_t(x)) \leq \epsilon\right\}\,,
      \end{equ}
      we aim to show that for all $\epsilon > 0$,
      $
        \prob{\limsup_{m \to \infty}A_m(\epsilon)^c} =0\,.
      $

      For a stepsize $\sem = \epsilon^2$ and a tolerance $\Delta(\epsilon) := e^{-NM} \epsilon^3/20$ for $M$ in \cref{e:M}, we introduce the sets
      \begin{equs}
          B_{j,m}(\epsilon) & := \left\{ \sup_{t \in (0, \sem )}  d_\T(\isf_t(\isf_{j \sem }(x)), \isf_{j \sem }(x)) \leq \frac \epsilon 3\right\}\,, \\
        B_{j,m}'(\epsilon) &:=  \left\{  d_\T(\Phi_{\sem }(\isf_{j \sem }(x)), \isf_{\sem }(\isf_{j \sem }(x))) \leq \Delta(\epsilon)\right\}\,.
      \end{equs}
       It is readily checked that, for sufficiently small $\epsilon > 0$, one has
      \begin{equ}\label{e:inclusion}
        \bigcap_{j=0}^{\floor{\sem ^{-1}}}B_{j,m}(\epsilon) \cap B_{j,m}'(\epsilon) \subset A_m(\epsilon)\qquad \text{for all } m \in \N\,.
      \end{equ}
       Indeed, adapting the estimate \cref{eq:trajsep} to trajectories of $\Phi$, and since by triangle inequality for all $k \in \{1, \dots, \floor{\sem ^{-1}}\}$
      \begin{equ}
        d_\T(\Phi_{k\sem }(x), \isf_{k\sem}(x)) \leq \sum_{j = 1}^{k} d_\T(\Phi_{(k-j) {\sem }}(\isf_{j\sem}(x)), \Phi_{(k-(j-1))\sem}(\isf_{(j-1)\sem}(x)))\,,
      \end{equ}
      on $\bigcap_{j=1}^{k}B_{j,m}'(\epsilon)$ for all $k \in \{1, \dots, \floor{\sem ^{-1}}\}$ we can write
      \begin{equ}
        d_\T( \Phi_{k \sem}(x),\isf_{k\sem }(x)) \leq e^{NM} \sum_{j = 1}^{k} d_\T(\Phi_{ \sem }(\isf_{j\sem}(x)), \isf_{  \sem }(\isf_{j\sem}(x))) \leq \frac \epsilon {10}\,.
      \end{equ}
      Combining the above with the definition of $B_{j,m}(\epsilon)$ and the fact that for $\epsilon$ small enough $\sup_{x \in \mathcal X, t \in (0, \sem )}d_\T(x, \Phi_t(x)) \leq NM s < \epsilon/3$  yields \cref{e:inclusion}.

      To conclude, it remains to estimate the probabilities of $B_{j,m}(\epsilon), B_{j,m}'(\epsilon)$: for every $\epsilon > 0$, we have by \cref{e:inclusion} that
      \begin{equs}
        \prob{\limsup_{m \to \infty} A_m(\epsilon)^c}& \leq \prob{\limsup_{m \to \infty} \left(\bigcup_{j=0}^{\floor {\sem^{-1}}} B_{j,m}(\epsilon)^c \cup B_{j,m}'(\epsilon)^c \right)}\\
        & \leq \sum_{j=0}^{\floor {\sem^{-1}}} \prob{  \limsup_{m \to \infty} B_{j,m}(\epsilon)^c }+ \sum_{j=0}^{\floor {\sem^{-1}}} \prob{  \limsup_{m \to \infty}B_{j,m}'(\epsilon)^c }\,,
      \end{equs}
        where the second inequality is a union bound. We finally obtain the desired claim by noting that the second term on the right hand side vanishes by application of \cref{l:marginals}, and that by the strong law of large numbers, recalling the definition of $\sem$ and that $\tau_k\overset{\mathrm{iid}}{\sim} \rho$ with $\mathbb E(\tau_k = 1)$, for the first term we have
      \begin{equs}
\prob{\limsup_{m \to \infty} \sup_{x \in \TN, t \in (0, \sem )}  d_\T(\isf_t(x) - x) > \frac \epsilon 3} & \leq \prob{\limsup_{m \to \infty}  \frac {1 } m \sum_{k = 1}^{sm} M \tau_k  > \frac \epsilon 3 }\\
& = \prob{\limsup_{m \to \infty}  \frac {1} {sm} \sum_{k = 1}^{sm} \tau_k  > \frac 1{3M\epsilon } }=0\,,
\end{equs}
upon choosing $\epsilon < 1/4M$.
    \end{proof}

\subsection{Convergence to the deterministic vortex flow}

In what follows, $G~:~\T \setminus (0,0)$ denotes the (zero-average) Green function of the Laplace operator $-\Delta$ on $\T$ and $K(x)=-\nabla^\perp G(x)$ returns to be the singular interaction of the PV system. We denote by $dx^N$ the Lebesgue (equivalently, Haar) measure on $\TN$.

\begin{theorem}\label{thm:main}
    $dx^N\otimes \PP$-almost surely, for all $t\in [0,1]$ we have
    \begin{equ}
       \isf_t(x) \to \Phi_t(x), \quad \text{as } m\to\infty\,.
    \end{equ}
\end{theorem}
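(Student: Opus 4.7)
The plan is to lift the regularized convergence of \Cref{p:convKsmooth} to the singular kernel $K$ by freezing the regularization scale $\delta$ adaptively to the deterministic trajectory and exploiting a bootstrap on separations.

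First I would invoke the almost-sure global well-posedness of \eqref{eq:pv} from \cite{Durr1982} to fix a full Lebesgue-measure set $\XX_0\subset\XX$ of initial configurations $x$ for which $t\mapsto\Phi_t(x)$ is smooth on $[0,1]$ and stays in $\XX$. For any such $x$, continuity and compactness give
\[
    2\delta(x):=\min_{i\neq j}\inf_{t\in[0,1]}\big|\Phi_t(x)_i-\Phi_t(x)_j\big|_\T>0\,,
\]
so that along the deterministic trajectory $K$ and $K_{\delta(x)}$ agree identically; by uniqueness of the smooth $K_{\delta(x)}$-flow (which is globally defined), $\Phi^{K_{\delta(x)}}_t(x)=\Phi_t(x)$ for all $t\in[0,1]$.

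Next I would apply \cref{p:convKsmooth} with the smooth kernel $K_{\delta(x)}$ (note that the proof there actually yields convergence uniform in $t\in[0,1]$, via the event $A_m(\epsilon)$). This gives, $\PP$-almost surely,
\[
    \sup_{t\in[0,1]} d_\T\bigl(\Psi^{m,K_{\delta(x)}}_t(x),\Phi_t(x)\bigr)\xrightarrow[m\to\infty]{}0\,.
\]
On this event, for all sufficiently large (random) $m$ the interpolated trajectory $t\mapsto\Psi^{m,K_{\delta(x)}}_t(x)$ stays within $d_\T$-distance $\delta(x)/2$ of the deterministic trajectory, hence is itself $\delta(x)$-separated throughout $[0,1]$. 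Along such a path $K$ and $K_{\delta(x)}$ coincide in every evaluation involved in the piecewise single-component flows $\Phi^{(i)}$ that build $\Psi^m$, so by uniqueness of ODE solutions at each step $\Psi^m_t(x)=\Psi^{m,K_{\delta(x)}}_t(x)$ for all $t\in[0,1]$. Combining the last two displays yields $\Psi^m_t(x)\to\Phi_t(x)$ uniformly in $t$, $\PP$-a.s., for every fixed $x\in\XX_0$.

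Finally I would use Fubini/Tonelli: the set of pairs $(x,\omega)$ along which convergence holds has full $dx^N\otimes\PP$-measure, which is the desired statement. The main technical obstacle is the one handled in the second paragraph, namely the self-consistency issue that the stochastic trajectory itself (not only its deterministic limit) must stay $\delta(x)$-separated in order to identify the flows under the two kernels; this is exactly where the uniform-in-time strengthening of \cref{p:convKsmooth} is essential, so any reformulation would first need to check that the event $A_m(\epsilon)$ therein indeed controls the whole path and not merely fixed-time marginals.
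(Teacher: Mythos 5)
Your argument is correct in substance but differs genuinely from the paper's. The paper proves \cref{lem:probbounds}, a D\"urr--Pulvirenti-type estimate directly for the \emph{random} interpolated flows $\Psi^m$: it bounds, uniformly over $m$, the $dx^N\otimes\PP$-measure of the set of $(x,\omega)$ for which the random trajectory dips below separation $\delta$, and then transfers the regularized convergence from \cref{p:convKsmooth} on the complementary events $A_\delta$. You instead invoke the deterministic D\"urr--Pulvirenti result from \cite{Durr1982} only as a black box, extract from it a fixed trajectory-wide separation $2\delta(x)>0$ for $x$ in a full-Lebesgue-measure set, and then bootstrap: the regularized random flow $\Psi^{m,K_{\delta(x)}}$ converges uniformly in $t$ to $\Phi_t(x)$ by \cref{p:convKsmooth}, so for $m$ large it inherits the separation and therefore coincides, by uniqueness of each piecewise ODE, with the singular random flow $\Psi^m$; Fubini then concludes. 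Both routes are valid. Yours is lighter, since it delegates the hard separation estimate to the cited deterministic result rather than re-deriving a version of it for the random system; what it gives up is the quantitative, $m$-uniform separation bound of \cref{lem:probbounds}, which has some independent interest (it shows the random flow itself stays away from the collision set with controlled probability, rather than just eventually tracking a separated deterministic orbit). Two small remarks on your write-up: (i) with the margin $\delta(x)/2$ the bootstrapped separation of the random path is exactly $\delta(x)$, the same scale as the cutoff in $K_{\delta(x)}$, which is borderline; taking the regularized kernel at scale $\delta(x)/2$ (or shrinking the tolerance to $\delta(x)/4$) removes any ambiguity about whether the kernels agree on the random path. (ii) You are right that the needed uniform-in-$t$ control is furnished by the sets $A_m(\epsilon)$ in the proof of \cref{p:convKsmooth}, not by its pointwise-in-$t$ statement; this strengthening is indeed what the bootstrap hinges on.
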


The proof essentially relies on the following bound on vortex distances,
which reprises the classical argument of D\"urr-Pulvirenti \cite{Durr1982}.

\begin{lemma}\label{lem:probbounds}
    There exists a constant $C=C(N)>0$ such that for all $\delta>0$
    \begin{equ}\label{eq:pulvirentata}
        \int_{\TN} dx^N\PP\pa{\min_{m\geq 0}\inf_{t\in [0,1]} \min_{i\neq j} d_\T(\isf_t(x)_i,\isf_t(x)_j) <\delta}\leq C (-\log \delta)^{-1}\,.
    \end{equ}
\end{lemma}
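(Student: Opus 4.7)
The plan is to leverage two structural properties of the split flow---Hamiltonian conservation and Liouville invariance---that mirror the deterministic setting, and then reduce the claim to a D\"urr--Pulvirenti type Lebesgue measure estimate. Consider the PV Hamiltonian $H(x)=\tfrac12\sum_{i\neq j}\xi_i\xi_j G(x_i-x_j)$. Along each single-vortex flow $\Phi^{(i)}$ only the $i$-th coordinate evolves, with velocity $v_i=\xi_i^{-1}\nabla^\perp_{x_i}H$, and since $\nabla_{x_i}H\cdot\nabla^\perp_{x_i}H=0$ the value of $H$ is preserved; by composition, $H\circ\Psi^m_t=H\circ\Phi^m_t=H$ for every $m\in\N$, every $t\in[0,1]$, and every realization of $\tau$. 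Moreover each $v_i$ is divergence-free in $x_i$, so $\Phi^{(i)}$ preserves $dx^N$ on $\TN$, and therefore so does $\Psi^m_t$.

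Armed with these, I would apply Fubini to rewrite the left-hand side of \cref{eq:pulvirentata} as $\EE_\tau|E_\delta(\tau)|$, with
\[
E_\delta(\tau):=\bigl\{x\in\TN:\exists m\geq 0,\ t\in[0,1]\text{ with }\min_{i\neq j}d_\T(\Psi^m_t(x)_i,\Psi^m_t(x)_j)<\delta\bigr\},
\]
and aim at a Lebesgue bound on $|E_\delta(\tau)|$ uniform in $\tau$. Setting an energy cutoff $R:=c\log(1/\delta)$, I would split
\[
|E_\delta(\tau)|\leq \bigl|\{x:|H(x)|>R\}\bigr| + \bigl|E_\delta(\tau)\cap\{|H|\leq R\}\bigr|.
\]
The first term is controlled by Markov since $G$ has only logarithmic singularities, so $H\in L^1(dx^N)$ and $|\{|H|>R\}|\leq\|H\|_{L^1}/R\lesssim(-\log\delta)^{-1}$. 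For the second, conservation of $H$ forces every trajectory to remain in $\{|H|\leq R\}$, so any near-collision at distance $\delta$ between a pair $(i,j)$ contributes $\sim\pm(2\pi)^{-1}\log\delta$ to the energy and must be compensated by further close pairs. Iterating this clustering in the spirit of \cite{Durr1982} and using Liouville invariance of $\Psi^m_t$ would yield a bound of the same $(-\log\delta)^{-1}$ order.

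The main obstacle is handling the quantifier $\min_{m\geq 0}$: a naive union bound over $m$ diverges. The argument closes precisely because $H$-conservation and Lebesgue preservation hold uniformly in $m$ and $\tau$, so the entire union over $m$ of near-collision initial data is trapped inside a single sublevel set $\{|H|\leq R\}\cap\{\min_{i\neq j}d_\T(x_i,x_j)<\delta\}$, reducing the trajectory statement to a purely measure-theoretic one. Executing the D\"urr--Pulvirenti iteration---with careful bookkeeping of possible sign cancellations when several pairs collapse simultaneously---is the technical heart of the proof.
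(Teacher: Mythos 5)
Your reductions are sound as far as they go: the Fubini step, the observation that each $\Phi^{(i)}$ preserves $dx^N$, and the conservation of $H$ along $\Psi^m$ are all correct (the paper indeed relies on measure preservation, and uses $H$-conservation elsewhere, in \cref{sec:statmech}). The gap is in the mechanism you propose for the core estimate. Because the Hamiltonian carries the signed weights $\xi_i\xi_j$, boundedness of $H$ does \emph{not} force vortices apart: when intensities have mixed signs the logarithmic divergences of different pairs cancel, and there are exact solutions (the classical self-similar three-vortex collapse) along which $H$ stays constant while $\min_{i\neq j}d_\T(x_i,x_j)\to 0$. So the set $E_\delta(\tau)\cap\{|H|\le R\}$ is not small for soft reasons, and the ``clustering/compensation iteration'' you defer to is precisely the step that is not known to close; it is also not what D\"urr--Pulvirenti actually do. Your claim that ``the entire union over $m$ of near-collision initial data is trapped inside $\{|H|\le R\}\cap\{\min d<\delta\}$'' conflates the initial datum with the configuration at the (random) near-collision time, and the transfer back to time $0$ is exactly what fails once cancellations are allowed.

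The missing idea is to replace $H$ by the \emph{unsigned} functional $L(x)=\sum_{i\neq j}G(x_i-x_j)+c\ \ge 0$, which is coercive at every pair collision regardless of the signs of the intensities, but is \emph{not} conserved. One then controls its growth in $L^1(dx^N)$ rather than pointwise: along a single-vortex flow $\Phi^{(1)}$ one has
\begin{equ}
  \frac{d}{dt}\,[L\circ\Phi^{(1)}_t](x)=\sum_{i\neq 1}\nabla G(x_i-x_1)\cdot\sum_{j\neq 1}\xi_j\nabla^\perp G(x_j-x_1)\,,
\end{equ}
where the diagonal terms $i=j$ vanish by the orthogonality $\nabla G\cdot\nabla^\perp G=0$; this kills the non-integrable $|x_i-x_1|^{-2}$ singularity, and the surviving cross terms are bounded by $C|x_i-x_1|^{-1}|x_j-x_1|^{-1}$, which is integrable on $\TN$. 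Measure preservation then lets you exchange $\int_{\TN}$ with $\sup_t$ and yields $\int_{\TN}\sup_{t\le t^\ast}L\circ\Phi^{(1)}_t\,dx^N\le \int_{\TN}L\,dx^N+Ct^\ast$, and composing over the random steps gives $\int_{\TN}\sup_{t\in[0,1]}L\circ\isf_t\,dx^N\le C m^{-1}\sum_{i\le mN}\tau_i$, whose expectation is bounded. Markov's inequality together with $L(x)\gtrsim -\log\min_{i\neq j}d_\T(x_i,x_j)$ then gives the $(-\log\delta)^{-1}$ bound. In short: Liouville invariance is used not to propagate a conserved level set, but to integrate the time-derivative of a non-conserved Lyapunov-type functional; without this substitution your argument cannot be completed for general intensity vectors.
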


\begin{proof}
    To lighten notation, in the following $C$ denotes a positive $N$-dependent constant possibly varying in each occurrence.
    Since $\isf_t$ is the result of subsequent compositions of the flows $\Phi^{(i)}$,
    the proof reduces to establish the thesis for the latter.
    The function
    \begin{equ}
        L:\TN\setminus \triangle \to [0,\infty)\,,
        \quad
        L(x)=\sum_{i\neq j} G(x_i-x_j)+c\,,
    \end{equ}
    (where $c=c(N)>0$ is a constant to be chosen so that $L\geq 0$, thanks to the fact that $G$ is bounded from below, and we define throughout $\triangle := \{x \in \TN~:~x_i = x_j\text{ for } i \neq j\}$)
    allows to control the minimum distance between vortices, since
    \begin{equ}\label{eq:Lestimate}
        L(x)\leq -C\log\pa{ \min_{i\neq j} d_\T(x_i,x_j)}, \quad x\in \TN\setminus \triangle\,.
    \end{equ}
    Notice that $L\in L^1(\TN)$ as $G\in L^1(\T)$.
    It holds:
    \begin{equ}
        \frac{d}{dt} [L\circ \Phi^{(1)}_t](x)
        =\sum_{i\neq 1} \nabla G(x_i-x_1) \sum_{j\neq 1} \nabla^\perp G(x_j-x_1)\,,
    \end{equ}
    in which the sum on the right-hand side also contains no contribution from the product with $i=j$ due to orthogonality.
    Integrating in time we can thus write, for $t^\ast>0 $,
    \begin{multline*}
        \int_{\TN} \sup_{t\in [0,t^\ast]} (L\circ \Phi^{(1)}_t(x)) dx^N
        =\int_{\TN} L(x) dx^N\\
        + \int_{\TN} \sup_{t\in [0,t^\ast]} \int_0^t \sum_{i\neq j\neq 1} \nabla G(\isf_s(x)_i-\isf_s(x)_1)\nabla^\perp G(\isf_s(x)_j-\isf_s(x)_1) ds dx^N\,.
    \end{multline*}
    In the latter expression, since $\Phi^{(1)}$ preserves $dx^N$,
    we can swap the integral over $\TN$ and the supremum over time;
    we can then use the estimate $|\nabla G (y)|\leq C |y|^{-1}$, $y\in \T$, to control factors of the integrand,
    finally arriving to
    \begin{equ}
        \int_{\TN} \sup_{t\in [0,t^\ast]} (L\circ \Phi^{(1)}_t(x)) dx^N
        \leq C t^\ast\,.
    \end{equ}
    The same argument clearly holds for all $\Phi^{(i)}$, and for compositions of those flows on subsequent time intervals
    as the ones in the definition of $\isf_t$, leading in particular to
    \begin{equ}
        \int_{\TN} \sup_{t\in [0,1]} (L\circ \isf_t(x)) dx^N
        \leq C {\frac 1 {m}}\sum_{i=1}^{mN} \tau_i\,.
    \end{equ}
    \noindent
    With this estimate at hand, the thesis follows from \cref{eq:Lestimate} and Markov inequality.
\end{proof}

\begin{proof}[Proof of \cref{thm:main}]
If $(\Omega,\PP)$ is the probability space on which the random times $\tau_i$ are defined, the measurable subset
\begin{equ}
    A_\delta\coloneqq
    \set{\min_{m\geq 0}\inf_{t\in [0,1]} \min_{i\neq j} d_\T(\isf_t(x)_i,\isf_t(x)_j) <\delta}
    \subset \Omega\times \XX\,,
\end{equ}
is such that on $A_\delta$ the random flow $\isf(x)_i$ does not change if the interaction kernel $K$ is replaced by $K_\delta$ as in \Cref{ssec:smooth}. In particular, conditionally to $A_\delta$, \cref{p:convKsmooth} applies yielding: $dx^N\otimes \PP$-almost surely on $A_\delta$, for all $t\in [0,1]$, $\isf_t(x) \to \Phi_t(x)$ as $ m\to\infty$. The proof is then completed by observing that $\bigcup_{\delta>0} A_\delta  = \XX\times \Omega$, therefore the subset of $\TN\otimes \Omega$ on which the thesis does not hold must be negligible by \cref{eq:pulvirentata}.
\end{proof}

\section{Equilibrium Statistical Mechanics}\label{sec:statmech}

We have exhibited a random dynamical system whose flow $\Psi^m$ converges to that of PV dynamics in the deterministic limit $m\to \infty$.
We conclude the present note with some remarks on the compatibility of the flow $\Psi^m$ with the statistical mechanics of PVs. We refer to \cite{Robert1991} for a survey on classical statistical mechanics approach to 2D turbulent phenomena, to \cite{Bouchet2012} for a more recent account, and  to \cite{Eyink1993} for the  relevance to microcanonical ensembles of PVs.

The interaction energy of the PV system,
\begin{equ}
    H(x_1,\dots,x_N)=\sum_{i\neq j} \xi_i \xi_j G(x_i-x_j),
\end{equ}
corresponds to the (renormalized) kinetic energy of the fluid, and it acts as the Hamiltonian function of \cref{eq:pv} regarded as the Hamilton equations in conjugate coordinates $(x_{j,1},\xi_j x_{j,2})$. Combined with the fact that the PV flow $\Phi$ (out of the negligible set of singular initial configurations) is the flow of a divergence-less vector field, and as such preserves $dx^N$ by Liouville theorem, this allowed Onsager \cite{Onsager1949} to consider canonical and microcanonical ensembles preserved by $\Phi$. Specifically,
\begin{equ}\label{eq:canonical}
    \nu_\beta(dx^N)=\frac1{Z_{\beta,N}} e^{-\beta H(x_1,\dots,x_N)},\quad
    Z_{\beta,N}=\int_{\TN} e^{-\beta H(x_1,\dots,x_N)} dx^N,
\end{equ}
is well defined (\emph{i.e.} $Z_{\beta,N}<\infty$) for inverse temperature $\beta < \frac{4\pi}{\min_i |\xi_i|}$
(\emph{cf.} \cite[Section 2]{RomitoCMP}) and defines an invariant measure of \cref{eq:pv}.
On the other hand, conditioning $dx^N$ to an energy level set $\{H=E\}$ one can introduce the microcanonical ensemble
\begin{equ}
    \mu_E(dx^N) =\frac{1}{Z_{E,N}} \delta \pa{H(x_1,\dots,x_N)-E} dx^N,
\end{equ}
$Z_{E,N}$ being the Lebesgue measure of $\{H(x_1,\dots,x_N)=E\}\subset \XX$.
For high enough energy $E\gg1$, Onsager predicted that, under the microcanonical ensemble, typical configurations of vortices behave similarly to samples from a negative-temperature Canonical ensemble, \emph{i.e.} $\beta<0$ in \cref{eq:canonical}. Under the latter distribution, for large enough $|\beta|$ so to prevent statistical Lebesgue repulsion (\emph{cf.} \cite[Section 2.3]{Bodineau1999}), typical configurations should exhibit aggregation of same-sign vortices, as proximity of vortices with different signs is penalized by the density $e^{-\beta H}$.
This should allow the use of PV statistical ensembles to describe the formation of coherent structures in 2D turbulent flows. At present, this remains mostly conjectural as far as rigorous results are concerned, and we shall rather refer to numerical studies such as \cite{Dritschel2015,Kanai2021} for a contemporary viewpoint.

The single component flow $\Phi^{(i)}$ is in fact the flow of the vector field $\nabla^\perp_i H$, thus $H$ is a first integral of motion for all $\Phi^{(i)}$'s, and consequently for the random flows $\Phi^m$, $\Psi^m$. Since $\Phi^{(i)}$ is still the flow of a divergence-less vector field, Liouville theorem applies and the measure invariance arguments just outlined can be repeated for $\Phi^m$, $\Psi^m$. As a consequence, the latter are completely equivalent to \cref{eq:pv} from the point of view of equilibrium states, while being simpler as far as the time evolution is concerned.
Let us also stress again the fact that these random flows are well-defined for \emph{all} initial PV configurations, therefore singular dynamics are completely ruled out in this setting. Incidentally, we observe that this possibly introduces a new tool in the study of the continuation of PV dynamics after collapse via stochastic regularization (\emph{cf.} \cite{Grotto2023}).

Further insight on the stability of vortex interactions is necessary in order to fully replicate the results of \cite{Agazzi2023} for split PV flows, but the splitting approach reduces the problem to the analysis of the evolution of a single PV in a fixed configuration of vortices, thus moving a step forward towards a better understanding of PV dynamics.

 \subsection*{Acknowledgements}
 AA acknowledges partial support by the Italian Ministry for University and Research through PRIN grant \emph{ConStRAINeD}, and by the University of Pisa, through project PRA $2022\_85$. FG was supported by the project \emph{Mathematical methods for climate science} funded by PON R\&I 2014-2020 (FSE REACT-EU). JCM thanks the NSF RTG grant DMS-2038056 for general support and the Visiting Fellow program at the Mathematics Department, University of Pisa.

\bibliography{biblio.bib}{}

\begin{thebibliography}{10}

\bibitem{Agazzi2022}
Andrea Agazzi, Jonathan~C. Mattingly, and Omar Melikechi.
\newblock Random splitting of fluid models: positive {L}yapunov exponents.
\newblock {\em arXiv preprint arXiv:2210.02958}, 2022.

\bibitem{Agazzi2023}
Andrea Agazzi, Jonathan~C. Mattingly, and Omar Melikechi.
\newblock Random splitting of fluid models: unique ergodicity and convergence.
\newblock {\em Comm. Math. Phys.}, 401(1):497--549, 2023.

\bibitem{Bodineau1999}
Thierry Bodineau and Alice Guionnet.
\newblock About the stationary states of vortex systems.
\newblock {\em Ann. Inst. H. Poincar\'{e} Probab. Statist.}, 35(2):205--237,
  1999.

\bibitem{Bouchet2012}
Freddy Bouchet and Antoine Venaille.
\newblock Statistical mechanics of two-dimensional and geophysical flows.
\newblock {\em Physics Reports}, 515(5):227--295, 2012.
\newblock Statistical mechanics of two-dimensional and geophysical flows.

\bibitem{Dritschel2015}
David~G. Dritschel, Marcello Lucia, and Andrew~C. Poje.
\newblock Ergodicity and spectral cascades in point vortex flows on the sphere.
\newblock {\em Phys. Rev. E}, 91:063014, Jun 2015.

\bibitem{Durr1982}
D.~D\"{u}rr and M.~Pulvirenti.
\newblock On the vortex flow in bounded domains.
\newblock {\em Comm. Math. Phys.}, 85(2):265--273, 1982.

\bibitem{Eyink1993}
G.~L. Eyink and H.~Spohn.
\newblock Negative-temperature states and large-scale, long-lived vortices in
  two-dimensional turbulence.
\newblock {\em J. Statist. Phys.}, 70(3-4):833--886, 1993.

\bibitem{Flandoli2011}
F.~Flandoli, M.~Gubinelli, and E.~Priola.
\newblock Full well-posedness of point vortex dynamics corresponding to
  stochastic 2{D} {E}uler equations.
\newblock {\em Stochastic Process. Appl.}, 121(7):1445--1463, 2011.

\bibitem{Grotto2022a}
Francesco Grotto and Umberto Pappalettera.
\newblock Burst of point vortices and non-uniqueness of 2{D} {E}uler equations.
\newblock {\em Arch. Ration. Mech. Anal.}, 245(1):89--126, 2022.

\bibitem{RomitoCMP}
Francesco Grotto and Marco Romito.
\newblock A central limit theorem for gibbsian invariant measures of 2d euler
  equations.
\newblock {\em Communications in Mathematical Physics}, 376(3):2197--2228,
  2020.

\bibitem{Grotto2023}
Francesco Grotto, Marco Romito, and Milo Viviani.
\newblock Zero-noise dynamics after collapse for three point vortices.
\newblock {\em Physica D: Nonlinear Phenomena}, page 133947, 2023.

\bibitem{Helmholtz1858}
H.~Helmholtz.
\newblock \"{U}ber {I}ntegrale der hydrodynamischen {G}leichungen, welche den
  {W}irbelbewegungen entsprechen.
\newblock {\em J. Reine Angew. Math.}, 55:25--55, 1858.

\bibitem{Jabin2018}
Pierre-Emmanuel Jabin and Zhenfu Wang.
\newblock Quantitative estimates of propagation of chaos for stochastic systems
  with {$W^{-1,\infty}$} kernels.
\newblock {\em Invent. Math.}, 214(1):523--591, 2018.

\bibitem{Kanai2021}
Toshiaki Kanai and Wei Guo.
\newblock True mechanism of spontaneous order from turbulence in
  two-dimensional superfluid manifolds.
\newblock {\em Phys. Rev. Lett.}, 127:095301, Aug 2021.

\bibitem{Marchioro1988}
Carlo Marchioro.
\newblock Euler evolution for singular initial data and vortex theory: a global
  solution.
\newblock {\em Communications in mathematical physics}, 116(1):45--55, 1988.

\bibitem{Marchioro1983}
Carlo Marchioro and Mario Pulvirenti.
\newblock Euler evolution for singular initial data and vortex theory.
\newblock {\em Communications in mathematical physics}, 91(4):563--572, 1983.

\bibitem{Marchioro1993}
Carlo Marchioro and Mario Pulvirenti.
\newblock Vortices and localization in euler flows.
\newblock {\em Communications in mathematical physics}, 154(1):49--61, 1993.

\bibitem{Modin2021}
Klas Modin and Milo Viviani.
\newblock Integrability of point-vortex dynamics via symplectic reduction: a
  survey.
\newblock {\em Arnold Math. J.}, 7(3):357--385, 2021.

\bibitem{Onsager1949}
Lars Onsager.
\newblock Statistical hydrodynamics.
\newblock {\em Il Nuovo Cimento (1943-1954)}, 6(Suppl 2):279--287, 1949.

\bibitem{Robert1991}
R.~Robert and J.~Sommeria.
\newblock Statistical equilibrium states for two-dimensional flows.
\newblock {\em J. Fluid Mech.}, 229:291--310, 1991.

\bibitem{Rosenzweig2022}
Matthew Rosenzweig.
\newblock Mean-field convergence of point vortices to the incompressible
  {E}uler equation with vorticity in {$L^\infty$}.
\newblock {\em Arch. Ration. Mech. Anal.}, 243(3):1361--1431, 2022.

\bibitem{Tabeling2002}
Patrick Tabeling.
\newblock Two-dimensional turbulence: a physicist approach.
\newblock {\em Physics reports}, 362(1):1--62, 2002.

\end{thebibliography}
\bibliographystyle{plain}

\end{document}